\newtheorem{theorem}{Theorem}[section]
\newtheorem{lemma}[theorem]{Lemma}
\newtheorem{proposition}{Proposition}
\theoremstyle{definition}
\newtheorem{remark}{Remark}
\def\ve{\varepsilon}
\def\curl{\text{curl}}
\title[Homogenized description of multiple GL vortices]
      {Homogenized description of multiple Ginzburg-Landau vortices pinned by small holes}
\author[Leonid Berlyand and Volodymyr Rybalko]{}
\subjclass{Primary: 49K20, 35J66, 35J50; Secondary: 47H11.}
\keywords{Homogenization, Ginzburg-Landau functional, pinning, vortices.}
\email{berlyand@math.psu.edu}
\email{vrybalko@ilt.kharkov.ua}
\thanks{The work of  L. Berlyand  was supported  by NSF grant DMS--1106666, the work of V. Rybalko was supported in part by NSF grant DMS--1106666}
\begin{document}
\maketitle

\centerline{\scshape Leonid Berlyand}
\medskip
{\footnotesize
\centerline{Department of Mathematics}
\centerline{The Pennsylvania State University, University Park, PA 16802, USA}

} 

\medskip

\centerline{\scshape Volodymyr Rybalko}
\medskip
{\footnotesize
 \centerline{Mathematical Division}
   \centerline{B.Verkin Institute for Low Temperature Physics and Engineering}
  \centerline{of the National Academy of Sciences of Ukraine}
   \centerline{47 Lenin Ave., 61103 Kharkiv, Ukraine}
}

\bigskip


\begin{abstract}
We consider  a homogenization problem for the magnetic Ginzburg-Landau
functional in  domains with a large number of small holes. We
establish a scaling relation between sizes of holes and the
magnitude of the  external magnetic field when  the  multiple vortices
pinned by holes appear in nested subdomains and their  homogenized
density  is described by a hierarchy of variational problems. This
stands in sharp contrast with  homogeneous superconductors, where
all vortices are known to be simple.  The proof is based  on the
$\Gamma$-convergence approach applied to a  coupled
continuum/discrete variational problem: continuum in the induced
magnetic field and discrete in the unknown finite (quantized)
values of  multiplicity of vortices  pinned by holes.
\end{abstract}

\section{Introduction}

Vortices determine electromagnetic  properties of superconductors
that are important for practical applications (e.g., resistance).
A key practical issue is to decrease the energy dissipation in
superconductors, which occurs due to the motion of vortices.  This
dissipation can be suppressed by the pinning of vortices.  Moreover,  problems of pinning in superconducting thin films lead to  the analysis of a two-dimensional Ginzburg-Landau (GL)
energy functional in a domain with periodic or random arrays of holes  also called antidots in physical literature (see e.g.,  \cite{BerMilPee06}  and references therein).

In this work we consider  a  two-dimensional mathematical model of pinning of vortices
by many holes in  relatively small superconducting samples (comparable to the London depth).
The sample is subjected to a uniform magnetic field, which is weak so that the vortices do not appear in the bulk
of the superconducting sample and they may exist only in the holes.

Since  modern experimental  techniques allow for the creation of  very small
holes, the question arises  if any specific scaling relations
between the external magnetic field and the size of the holes can
lead to novel physical effects.  In particular, typical
experimental results lead to  uniform arrays of vortices in the
entire domain.  By contrast, in this work we derive a special
``critical" scaling relation when a  family of nested subdomains
with  pinned vortices of increasing multiplicity appears.

We next present a brief review  of relevant mathematical work. The
study of  pinning by a {\it finite  number of pinning sites} was
pioneered in \cite{LM}, where  a simplified GL model with no
magnetic field   and  discontinuous pinning term for a single
inclusion was considered.  The existence of $d$ vorticies of
degree $1$ inside the inclusion was established  for the Dirichlet
boundary  data with degree  $d$.  The results of \cite{LM}  were
subsequently generalized  for the magnetic GL functional
\cite{K1}, \cite{K2} and pinning by a single inclusion. A
comprehensive study of pinning by finitely many normal inclusions
and holes   for the magnetic GL functional was performed in
\cite{AB1,AB2}. More recently  pinning by finitely many holes
whose  sizes  goes to zero  as the Ginzburg-Landau parameter goes
to infinity was established in  \cite{DM} for  the SGL model.

Homogenization in the framework of magnetic GL model with
continuous oscillating pinning term was considered in the
pioneering work \cite{ASS}, where large number of vortices are
described by the homogenized vorticity density. Since  some
composite superconductors  are described by a discontinuous
pinning term, in subsequent works \cite{DMM,Dos11} homogenization
problems for such a term were address in the context of simplified
GL model and special  Dirichlet boundary conditions, which result
in either no vortices \cite{DMM} or $d$ vortices \cite {Dos11}.

In this work we study a  homogenization problem for a {\it large number of vortices}  and
large number of pinning holes that are described by a perforated domain $\Omega_\ve$
(which corresponds to  a discontinuous pinning  term). This problem  is  described by
the minimizers of the GL functional
\begin{equation}
\label{initialGL} GL(u,A)=\frac{1}{2}\int_{\Omega_\ve} (|\nabla u
-iAu|^2+\frac{\kappa^2}{2}(1-|u|^2)^2)\,
dx+\frac{1}{2}\int_\Omega({\rm curl} A-h_{ext}^\ve)\, dx.
\end{equation}
The unknowns here are the complex order parameter $u$ and the
vector potential of the magnetic field $A$, while $h_{ext}^\ve$ is
given constant external magnetic field (a positive scalar
quantity). 
The
domain $\Omega_\ve$ in (\ref{initialGL}) is obtained by
perforating a given simply connected bounded domain $\Omega\subset
\mathbb{R}^2$ by a large number $N_\ve$ of small holes.  Holes are
all identical disks with radius $\rho_\ve$ and have periodically
distributed centers $a_{j}^\ve$, with $\ve>0$ being a small
spatial period. We assume that
%
\begin{equation} \label{separation}
|\log \kappa| \gg |\log \rho_\ve |\ \text{and}\ \rho_\ve\ll \ve
\end{equation}
(hole radius much  greater than vortex core and much less than the spatial period $\varepsilon$). Moreover,
we consider the following scaling relations of the magnetic field and diameters of holes,
\begin{equation}
\label{scales}
h_{ext}^\ve=\sigma/\ve^2, \ \text{diam}(\omega_{j}^\ve)=2e^{-\gamma/\ve^2},
\end{equation}
where $\sigma$ and $\gamma$ are fixed positive numbers. This
scaling corresponds to the finite flux of the magnetic field over
each periodicity  cell $\int_{cell} h_{ext}^\ve dx=O(1)$. Note
that the effective core of a vortex pinned by a hole is  of the order
of the hole size  $\rho$, and therefore its energy is of order
$\log (1/\rho)$. Then if vortices inside holes have a finite
degree, we get $\rho=\exp{(-\gamma/\varepsilon^{2})}$, which  is a
much stronger separation than just $\rho \ll \ve$ in
\eqref{separation}. Our results show that the scaling
\eqref{scales} leads to a nonuniform spatial distribution of
multivortices, whereas the other scalings lead to a  simple
homogenization limit (no vortices for ``very small holes" and
constant uniform vorticity for larger holes). Our analysis shows
that the scaling \eqref{scales} is special because the  energy of
the vortices and the  bulk   energy of  the superconductor outside
the vortex cores are of the same order.

Recall that in homogeneous superconductors there are no vortices
if $0<h_{ext}^\ve<H_{c1}:=C_1 \log \kappa$ (below the first
critical field \cite{SS}). A further increase in the magnetic
field $C_1 \log \kappa <h_{ext}^\ve < C_2 \kappa^2$ results in a
lattice of simple (multiplicity one) vortices. Our results show
that in a superconductor with holes the interval
$0<h_{ext}^\ve<H_{c1}:=C_1 \log \kappa$  is  divided into two
subintervals: no vortices at all, and vortices inside holes but
not in the bulk of the superconductor.

Formal calculations show that under the scale separation condition
(\ref{separation}) there are no vortices in the bulk of the
domain $\Omega_\ve$ and the term with the integrand  $\frac{\kappa^2}{4}(1-|u|^2)^2$
can be effectively replaced by the constraint $|u|=1$.
Mathematical justification of this replacement based on the energy decomposition
will be presented elsewhere.   For  a  domain with finitely many holes of finite size,  analogous
replacement was rigorously justified in \cite{AB2}.  However,  even in the case of one  small hole the analysis  from  \cite{AB2} can not be carried out since  the smaller the hole the more energy is needed for pinning by this hole (that is the energy of the vortex in the hole  blows up as  the size of the hole goes to zero).

This leads us to study the homogenization limit for  the following  minimization problem
\begin{equation}
\label{min_prob}
M_\ve =\inf\{F_{\ve}(u,A);\ u\in
H^1(\Omega_\ve;S^1),\, A\in H^1(\Omega;\ \mathbb{R}^2)\}
\end{equation}
for the functional
\begin{equation}\label{harm_map_func}
F_{\ve}(u,A) = \frac{1}{2} \int_{\Omega_\ve}|\nabla u - iAu|^2 dx
+ \frac{1}{2} \int_{\Omega}(\curl A - h_{ext}^{\ve})^2 dx, \, \ve
> 0.
\end{equation}
Let $d_j^\ve$ be (integer) degrees of minimizer $u^\ve$ on
$\partial\omega_{j}^\ve$, then the main objective of this work  is
to obtain a {\it homogenized vorticity density} $D(x)$ defined as
the weak limit of measures $D(x)=w-\lim_{\ve\to 0}\ve^2 \sum
d_j^\ve\delta_{a_j^\ve}(x)$, that provides the limiting description of
$d_j^\ve$ as $\ve \to 0$.

For every fixed $\ve>0$ minimizers of $(u^\ve, A^\ve)$ of problem
(\ref{min_prob})-(\ref{harm_map_func})  can be expressed in terms of the degrees $d_j^\ve$
via the following problem for the  induced magnetic field $h^\ve=\curl A^\ve$ \cite{AB2}
\begin{equation}
\label{eq_h}
\begin{cases}
        -\Delta h^{\varepsilon} + h^{\varepsilon} = 0\ \mbox{in}\ \Omega_{\varepsilon}, \\
        h^{\varepsilon}(x) = h_{ext}^{\varepsilon}\ \mbox{on}\ \partial\Omega, \\
        h^{\varepsilon}(x) = H^{\varepsilon}_{ j}\ \mbox{ on }\ \omega_{\ve}^{j}, \ j=1,2,\dots,N_\ve,\\
        - \int_{\partial\omega_{j}^\ve} \frac{\partial h^{\varepsilon}}{\partial\nu} \; ds = 2\pi d_{j}^\ve -
        \int_{\omega_{j}^\ve} h^{\varepsilon}(x) dx,
\end{cases}
\end{equation}
where $H^{\varepsilon}_{ j}$ are unknown constants that are part of the problem. Note that if we know the
degrees $d_j^\ve$, then the minimum (\ref{min_prob}) is given by
\begin{equation}
\label{reduced_harm_map}
E_{\ve}(h^\ve)= \frac{1}{2} \int_{\Omega_\ve}|\nabla h^\ve|^2 dx
+ \frac{1}{2} \int_{\Omega}(h^\ve - h_{ext}^{\ve})^2 dx,
\end{equation}
and conversely, the minimum of (\ref{harm_map_func}) is obtained
by minimizing (\ref{reduced_harm_map}) in integer $d_j^\ve$, where
$h^{\varepsilon}(x;\{d_j^\ve\})$ is the unique solution of
(\ref{eq_h}). Thus, the  tuple $\{d_j^\ve\}$ is obtained by
minimizing \eqref{reduced_harm_map} over tuples of integers and
the  infinite-dimesional problem \eqref{min_prob} is reduced   the
following finite-dimensional (of dimension $N_\ve$) minimization
problem

\begin{equation}
\label{r} M_\ve= \inf\{E_\ve (h^\ve);\ h^\ve=h^\ve(x;\{d^\ve_j\})
\ \text{solves (\ref{eq_h}) for integer}\ d^\ve_j\}
\end{equation}


\begin{remark}\label{minimization} In the minimization problem \eqref{min_prob} $\{d_j^\ve\}$ are unknown integers defined as degrees of the $S^1$-valued function $u$.
  By contrast, problem \eqref{eq_h} can be solved for any unknown real-valued tuple $d_j^\ve,\; j=1,..., N_\ve$. However, in order to restore the minimizer $u_\ve$ from the minimizer $h^\ve$, it is necessary  for the constants  $d_j^\ve$ to be integers. Thus, the  condition of $d_j^\ve$ being integers (a quantization) is a {\it constraint} in the minimization problem for $h^\ve$.
\end{remark}

In broad terms our result can be described as follows. The
homogenized vorticity density $D(x)$ is a piecewise constant
function that takes increasing integer values on a family of
$j=j(\Omega,\gamma, \sigma)$ nested subdomains, and it is a  real
constant (not necessarily an integer) in the smallest (inner) domain.
This picture describes a rise of vortices of increasing
multiplicity. The precise formulation of these results is
presented below in Section \ref{4}.

Heuristic explanation of the formation of multivortices
in a superconductor with holes (unlike simple vortices in a homogenous SC)  can be described as follows.
If the number of vortices is much less than the number of holes, then all vortices are simple because of
repulsion and there are enough  available holes for all of them. If the number of vortices is comparable
with the number of holes (same order), then the collective effect takes over, namely,  vortices  are accumulated
in some  holes resulting  in the emergence of  multivortices.

\section{Homogenization (corrector) and compactness results}

\label{section_hom_and_comp}

We introduce res\-caled quantities
\begin{equation}
\label{rescaled_all}
\tilde h^\ve=\ve^2 h^\ve,\ \tilde h^\ve_{ext}=\ve^2 h^\ve_{ext},\
\tilde E_{\ve}(\tilde h_\ve)=\ve^4 E_{\ve}(\tilde h_\ve), \tilde M_\ve =\ve^4 M_\ve.
\end{equation}
%
Note that $h^\ve$ (and therefore $\tilde h_\ve$) is determined
uniquely by the tuple of integers $d_j^\ve$. Thus, abusing notation a
little, we may write $\tilde E_{\ve}(\tilde
h^\ve)=\tilde E_\ve (\{d_j^\ve\})$. Consider a minimizing tuple of
degrees $\{d_j^\ve\}$, so that the corresponding solution of
(\ref{eq_h}), rescaled according to (\ref{rescaled_all}),
satisfies $\tilde E_{\ve}(\tilde h_\ve)=\tilde M_\ve$.

First we obtain a priori bounds for these degrees $d_j^\ve$ in the following

\begin{lemma}\label{bound_deg}
Let $d_j^\ve$ be degrees of the minimizer of (\ref{min_prob}), then
\begin{equation}
\sum (d_j^\ve)^2\leq C/\ve^2,
\label{sum_d_j_squared}
\end{equation}
where $C$ is independent of $\ve$.
\end{lemma}

\begin{proof} The weak formulation of the problem for $\tilde h^\ve$ reads, find
$\tilde h^\ve\in H^1(\Omega)$ such that $\nabla \tilde h^\ve=0$  in all $\omega_j^\ve$ and
$\tilde h^\ve=\sigma$ on $\partial \Omega$, and
\begin{equation}
\label{eq_tilde_h}
\int_\Omega(\nabla \tilde h^\ve \cdot \nabla v+\tilde h^\ve v )dx-
2\pi\ve^2\sum  d_j^\ve v|_{\omega_j^\ve}
\\
=0
\end{equation}
holds for every test function $v\in H^1_0(\Omega)$ such that
$\nabla v=0$ in all $\omega_j^\ve$. In particular, if we choose
all $d_j^\ve=0$,  and set $v=\tilde h^\ve-\sigma$, we get  an a
priori bound (for $\tilde h$ that corresponds  to this choice of
degrees $d_j^\ve$)
\begin{equation*}
\|\tilde h^\ve\|_{H^1(\Omega)}:=\int_{\Omega_\ve}(|\nabla \tilde h^\ve|^2+ (\tilde h^\ve)^2) dx \leq C
\end{equation*}
therefore $\tilde M_\ve\leq C$, where $C$ is independent of $\ve$.
Hence for the minimizing tuple $\{d_j^\ve\}$ we have $\tilde
E_\ve(\tilde h^\ve)\leq C$ and thus  $\|\tilde
h^\ve\|_{H^1(\Omega)}\leq C$, with another independent of  $\ve$
constant $C$.
Now choose the test function
$v=\sum d_{j}^\ve L_j^\ve(x)/\log(2\rho_\ve/\ve)$ in (\ref{eq_tilde_h}), where
\begin{equation}
\label{def_L}
L_j^\ve(x)=
\begin{cases}
\log (2|x-a_j^\ve|/\ve)\ \text{if}\ x\in B_{\ve/2}(a_j^\ve)\setminus B_{\rho_\ve}(a_j^\ve)\\
\log(2\rho_\ve/\ve)\ \text{if}\ x\in B_{\rho_\ve}(a_j^\ve)\\
0\ \text{if}\ x\not\in B_{\ve/2}(a_j^\ve).
\end{cases}
\end{equation}

The heuristic idea behind the introduction of $L_j^\ve(x)$ can be
explained as follows. If the problem \eqref{eq_h} is rescaled by
$\ve^{-1}$, it becomes a Laplacian to the leading term, whose
solution  in an annular domain (with a single hole) and constant
values on the boundaries is $C_1+C_2 \log |x|$. Note also that
scaling in \eqref{def_L}  is such that the supports of
$L_j^\ve(x)$  are disjoint.

Now simple computations lead to the required bound,
$$
2\pi\ve^2\sum (d_j^\ve)^2=\int_\Omega(\nabla \tilde h^\ve \cdot \nabla v+\tilde h^\ve v )dx\leq
\sqrt{2\pi\ve^2(1+{o}(\ve))\sum (d_j^\ve)^2/\gamma}\, \|\tilde h^\ve\|_{H^1(\Omega)},
$$
i.e. $2\pi\ve^2\sum (d_j^\ve)^2\leq  (1+{o}(\ve))\|\tilde
h^\ve\|_{H^1(\Omega)}^2/\gamma\leq C$, where $\gamma$ is defined
in \eqref{scales}.
\end{proof}


It follows from Lemma \ref{bound_deg} that, up to extracting a subsequence,
\begin{equation}
\label{vorticity_conv}
\zeta^\ve=\ve^2 \sum d_j^\ve\delta_{a_j^\ve}(x)\rightharpoonup D(x)\ \text{as distributions, and}\
D(x)\in L^2(\Omega).
\end{equation}
>From now on let $\{d_j^\ve\}$ denote an arbitrary sequence of
tuples of integers such that (\ref{sum_d_j_squared}) and
(\ref{vorticity_conv}) hold, and $\tilde h^\ve$ is the function
associated to the tuple $\{d_j^\ve\}$, i.e. $\tilde h^\ve=\ve^2
h^\ve$, where $h^\ve$ is the solution of (\ref{eq_h}).

It is rather easy to see that under the above mentioned conditions we can pass to the limit
in (\ref{eq_tilde_h}) to get that $\tilde h^\ve $ converges weakly-$H^1$ to the solution
$\bar h$ of the homogenized
problem
\begin{equation}
\label{hom_eq_bar_h}
\begin{cases}
-\Delta \bar h +\bar h= 2\pi D(x)\ \text{in}\ \Omega\\
\bar h =\sigma\ \text{on}\ \partial \Omega.
\end{cases}
\end{equation}
(for details see Lemma \ref{hom_lemma} below). However this
weak-$H^1$ convergence is not sufficient for our principal goal of
describing the limiting vorticity $D(x)$. Actually, this will be done
by calculating the $\Gamma$-limit of the
functionals $\tilde E^\ve$, which requires convergence of energies
for the optimal lower bound (that matches the upper bound).  In
order to obtain the strong-$H^1$ convergence, we next introduce a
corrector.

Consider the ansatz,
\begin{equation}
\label{ansatz}
\tilde h^\ve(x)=\bar h^\ve(x)-\ve^2 \sum d_j^\ve L_j^\ve(x)=\bar h^\ve(x)+R^\ve,
\end{equation}
where functions $L_j^\ve(x)$ are given by (\ref{def_L}). The problem for $\bar h^\ve$
(in its weak form)
is, find
$\bar h^\ve\in H^1(\Omega)$ such that $\nabla \bar h^\ve=0$  in all $\omega_j^\ve$ and
$\bar h^\ve=\sigma$ on $\partial \Omega$, and
\begin{equation}
\label{eq_bar_h}
\int_\Omega(\nabla \bar h^\ve \cdot \nabla v\,+\bar h_\ve v )dx+\sum \int_{B_{\ve/2}(a_j^\ve)} v
R^\ve(x) dx\\
=2\ve\sum  d_j^\ve \int_{\partial B_{\ve/2}(a_j^\ve)}v\, d s
\end{equation}
holds for every test function $v\in H^1_0(\Omega)$ such that $\nabla v=0$ in all $\omega_j^\ve$.

\begin{lemma}
\label{correct_lem} Under conditions (\ref{sum_d_j_squared}),  (\ref{vorticity_conv})
functions $\bar h^\ve$ converge strongly-$H^1$ to $\bar h$, the
unique solution of  (\ref{hom_eq_bar_h}).
\end{lemma}

\begin{remark} Lemma \ref{correct_lem} shows that the function $R^\ve(x)=-\ve^2 \sum d_j^\ve L_j^\ve(x)$
is a corrector, so that $\bar h^\ve=\tilde h^\ve-R^\ve$ converges
strongly in $H^1(\Omega)$.
\end{remark}

\begin{proof}
Using  (\ref{sum_d_j_squared}) one shows that $R^\ve$ converges weakly-$H^1$ to zero, and therefore, up to extracting a
subsequence, $\bar h^\ve \rightharpoonup \bar h$. To prove that
$\bar h$ solves (\ref{hom_eq_bar_h}) we consider test functions
$v^\ve=v(x)+\sum \phi(x/\rho_\ve)(v(a_j^\ve)-v(x))$, where $v\in
C_0^\infty(\Omega)$ is an arbitrary function, and $\phi(x)$ is a
smooth cut-off function  such that $\phi=1$ if $|x|\leq 1$ and
$\phi=0$ if $|x|>2$. Set $v=v^\ve$ in  (\ref{eq_bar_h}) and pass
to the limit as $\ve\to 0$ to get
\begin{equation*}
\int_\Omega (\nabla \bar h\cdot\nabla v+\bar h v)\, dx=
2\pi\int_\Omega D(x)v \, dx.
\end{equation*}
Thus $\bar h$ solves (\ref{hom_eq_bar_h}).

Next we show that $\bar h^\ve$ converges strongly-$H^1$ to $\bar
h$. We set $v=\bar h^\ve-\sigma$ to obtain in the limit $\ve\to
0$,
\begin{equation*}
\limsup \int_\Omega\nabla \bar h^\ve \cdot \nabla \bar h^\ve
dx=2\limsup\ve\sum  d_j^\ve \int_{\partial B_{\ve/2}(a_j^\ve)}(\bar
h^\ve-\sigma) \, d s+\int_\Omega (\sigma- \bar h)\bar h dx.
\end{equation*}
By the Poincar\'e inequality, we have
$$
\int_{\Pi_j^\ve}\Bigl|\bar h^\ve- \frac{1}{\pi\ve}\int_{\partial B_{\ve/2}(a_j^\ve)}\bar h^\ve\, d s\Bigr|^2 dx\leq C\ve^2
\int_{\Pi_j^\ve}|\nabla\bar h^\ve|^2 dx
$$
and
$$
\int_{\Pi_j^\ve}\Bigl|\bar h^\ve- \frac{1}{\ve^2}\int_{\Pi_j^\ve}\bar h^\ve dx\Bigr|^2 dx\leq C\ve^2
\int_{\Pi_j^\ve}|\nabla\bar h^\ve|^2 dx,
$$
hence
$$
\frac{1}{\pi\ve}\int_{\partial B_{\ve/2}(a_j^\ve)}\bar h^\ve\, d s=
\frac{1}{\ve^2}\int_{\Pi_j^\ve}\bar h^\ve dx+ O(1)
\bigl(\int_{\Pi_j^\ve}|\nabla\bar h^\ve|^2 dx\bigr)^{1/2},
$$
where $\Pi_j^\ve$ is the cell with the center at $a_j^\ve$ and the
side length $\ve$. Therefore
$$
2\lim\ve\sum  d_j^\ve \int_{\partial B_{\ve/2}(a_j^\ve)}\bar h^\ve\, d
s=2\pi \int_\Omega D(x)\bar h\, dx,
$$
where we have used (\ref{sum_d_j_squared}), (\ref{vorticity_conv})
and the fact that $\bar h^\ve\to \bar h$ strongly in
$L^2(\Omega)$. Thus, taking into account (\ref{hom_eq_bar_h}), we
finally get
\begin{equation*}
\limsup \int_\Omega\nabla \bar h^\ve \cdot \nabla \bar h^\ve
dx=2\pi \int_\Omega D(x)(\bar h-\sigma) \, dx +\int_\Omega
(\sigma- \bar h)\bar h dx =\int_\Omega\nabla \bar h \cdot \nabla
\bar h dx.
\end{equation*}
This implies that $\bar h^\ve\to \bar h$ strongly in
$H^1(\Omega)$.
\end{proof}

As a corollary of Lemma \ref{correct_lem} we obtain the following
\begin{lemma} \label{hom_lemma}
Under conditions (\ref{sum_d_j_squared}), (\ref{vorticity_conv}) the following energy
expansion holds,
\begin{equation}
\label{corrector}
\tilde E_\ve(\tilde h^\ve)=\bar E_1(\bar h)+\pi\gamma \ve^2 \sum (d_j^\ve)^2+o(1),
\end{equation}
where
\begin{equation}
\label{first_hom_part}
\bar E_1(\bar h)=\frac{1}{2} \int_{\Omega}|\nabla \bar h|^2 dx + \frac{1}{2} \int_{\Omega}(\bar h - \sigma)^2 dx
\end{equation}
\end{lemma}

The energy expansion \eqref{corrector} will be used in the proof
of the $\Gamma-$ convergence result in Section
\ref{sectionLimVort}. Note that the second term in
\eqref{corrector} can also become vanishingly small, this is the
case when the external field is weak, $h^\ve_{ext}\ll \sigma_{\rm
cr1}/\ve^2$, where $\sigma_{\rm cr1}$ is the first critical value
given by \eqref{lambda_cr1}. On the other hand, if $\lim \ve^2
h^\ve_{ext}>\sigma_{\rm cr1}$, then the second term in expansion
\eqref{corrector} for minimizers $\tilde h^\ve$ of
\eqref{reduced_harm_map} is bounded below by a positive constant.

\begin{proof} Since $\bar h^\ve\to\bar h$ strongly in
$H^1(\Omega)$ while $R^\ve\to 0$ weakly in $H^1(\Omega)$, we have
$$
\tilde E_\ve(\tilde h^\ve)=
\bar E_1(\bar
h^\ve)+\frac{1}{2}\int_\Omega|\nabla R^\ve|^2\, dx+o(1)=\bar
E_1(\bar h)+\frac{1}{2}\int_\Omega|\nabla R^\ve|^2\, dx+o(1)
$$
A straightforward calculation of the second term in this
expansion yields (\ref{corrector}).
\end{proof}


\section{Limiting vorticity via $\Gamma$-convergence}
\label{sectionLimVort}

The main result of this work describing the limiting
vorticity is obtained by proving the $\Gamma$-convergence
of functionals $\tilde E_\ve$ with respect to the weak convergence (\ref{vorticity_conv}) of vorticity measures,
\begin{equation}
\label{Gamma_limit}
\tilde E_\ve (\{d_j^\ve\})\ \Gamma\text{-converge to}\ \bar E_0(D)= \bar E_1(\bar h)+\pi\gamma\int_\Omega\Phi (D(x))dx\
\text{as}\ \ve\to 0,
\end{equation}
where $\bar h$ is the unique
solution of (\ref{hom_eq_bar_h}).
More precisely we demonstrate that
\begin{itemize}
\item[(i)] ($\Gamma-liminf$ inequlity) if conditions (\ref{sum_d_j_squared}) and (\ref{vorticity_conv})
are satisfied, then
\begin{equation}
\label{liminf} \liminf \tilde E_\ve(\{d_j^\ve\})\geq  \bar E_0(D);
\end{equation}
\item[(ii)] ($\Gamma-limsup$ inequlity) $\forall D(x)\in L^2(\Omega)$ there is a (recovery) sequence of tuples
$\{d_j^\ve\}$ satisfying conditions (\ref{sum_d_j_squared}) and (\ref{vorticity_conv})
and such that
\begin{equation}
\label{limsup} \limsup \tilde E_\ve(\{d_j^\ve\})\leq \bar E_0(D).
\end{equation}
\end{itemize}

\begin{remark}
Note that condition (\ref{vorticity_conv}) implies  that the limit
$D(x)$ exists as a distribution. The condition
(\ref{sum_d_j_squared}) implies that $D(x)$ is, in fact, a
function from $L^2(\Omega)$.
 \end{remark}
The function $\Phi$ in the limit functional (\ref{Gamma_limit}) is
a continuous piecewise linear function such that $\Phi(d)=d^2$ at
integer points $d$. It describes the homogenized density of
energies of individual vortices, whereas $\bar E_1$ corresponds to
the interaction of a vortex with magnetic field due to other
vortices and external field.

Thanks to the energy expansion (\ref{corrector}), for
``$\liminf$" inequality
we need only to prove the lower bound
\begin{equation}
\label{aux_lower_bound}
\liminf \ve^2\sum (d_j^\ve)^2\geq \int_\Omega\Phi(D(x))\,dx.
\end{equation}
Since the left hand side of (\ref{aux_lower_bound}) is a nonlinear
(quadratic) function of $d_j^\ve$, we use an analog of Young
measures (see Appendix).

\subsection{Lower bound}

Spread the measure $\zeta^\ve$ (defined in \eqref{vorticity_conv}), which is the sum of point  masses, over periodicity
cells $\Pi_j^\ve$  by setting $D^\ve=d_j^\ve$ in
$\Pi_j^\ve$ ($\Pi_j^\ve$ is the  cell centered at $a_j^\ve$). Then represent $D^\ve$ as
\begin{equation}
\label{repres} D^\ve(x)=\sum_{k\in\mathbb{Z}} k \mu_k^\ve(x),\
\text{where}\ \mu_k^\ve (x)=\begin{cases}
1\; \text{in}\;  \Pi_{j}^{\ve}, \; \text{if} \;k=d_{j}^{\ve}\\
0,\; \text{otherwise}.
\end{cases}
\end{equation}
We extend $D^\ve$ and $\mu^\ve_k$, $k\not =0$, on $\Omega$ by
setting $D^\ve=\mu^\ve_k=0$ in  $\Omega\setminus \cup\Pi_j^\ve$ and also set $\mu_0=1$ in
$\Omega\setminus \cup\Pi_j^\ve$.
The functions $\mu_k^\ve(x)$ satisfy $\mu^\ve_k\geq 0$ and  $\sum \mu_k^\ve=1$ and therefore
form a partition of unity. Clearly,
we can extract a subsequence such that
\begin{equation}
\label{mu_k_converg}
\mu_k^\ve\rightharpoonup \mu_k\ \text{weakly in} \ L^2(\Omega)\ \forall k\in\mathbb{Z}.
\end{equation}
Thanks to the bound (\ref{sum_d_j_squared}) we have
\begin{equation}
\label{bound1}
\sum_{k\in\mathbb{Z}} k^2 \int_\Omega \mu_k^\ve dx=\ve^2\sum (d_j^\ve)^2\leq C.
\end{equation}
Hence the limit functions $\mu_k$ also form a partition of unity,
\begin{equation}
\label{tightness}
\mu_k\geq 0\ \text{and}\   \sum \mu_k=1.
\end{equation}
Indeed, the weak limit of non-negative functions is non-negative.
Analogously  $\sum \mu_k\leq 1$, while by (\ref{bound1}) we have
\begin{equation*}
\sum_{|k|\leq K} \int_\Omega \mu_k dx \geq |\Omega|-
\limsup_{\ve\to 0}\sum_{|k|>K}\int_\Omega \mu_k^\ve dx \geq |\Omega|-C/K^2,\  \forall K>0,
\end{equation*}
that yields $\sum \mu_k=1$.
Moreover, the function $D(x)$ defined in (\ref{vorticity_conv})
admits the representation
\begin{equation}
\label{D_represent}
D(x)=\sum k \mu_k(x)
\end{equation}
and  the equality in \eqref{bound1} implies that
\begin{equation}
\label{lower_bound_d}
\liminf \ve^2\sum (d_j^\ve)^2\geq \sum_{k\in\mathbb{Z}} k^2 \int_\Omega \mu_k(x).
\end{equation}
In order to obtain a lower bound in terms of $D(x)$, introduce
\begin{equation}
\label{toy_min} \Phi(D):=\min_{\{\mu_k\}_{k\in
\mathbb{Z}}}\left\{\sum_{k\in\mathbb{Z}} k^2 \mu_k; \ \mu_k\geq 0,
\ \sum \mu_k=1,\ \sum k \mu_k=D\right\}.
\end{equation}
for every real number $D$. Then \eqref{lower_bound_d} implies
the lower bound
 \begin{equation}
\label{lower_bound_extra}
\liminf \ve^2\sum (d_j^\ve)^2\geq  \int_\Omega \Phi(D(x)) dx
\end{equation}

The function $\Phi(D)$ can be computed as follows

 \begin{lemma}
 \label{Phi}
 $\Phi(D)=(2k+1)|D|-k-k^2$ if $k\leq|D|< k+1$, $k=0,1,2,\dots$.
Moreover, if $D=d$ is an integer, then the unique minimizing
sequence   is  $\mu_d=1$, $\mu_k=0$ \ $\forall k\not=d$. In the case $D$ is
non integer, represent $D$ as the convex hull of the two nearest
integers $d$ and $d+1$, $D=\alpha d+(1-\alpha) (d+1)$, then the
unique minimizing sequence is  $\mu_{d}=\alpha$,
$\mu_{d+1}=1-\alpha$, $\mu_k=0$ \ $\forall k\not\in\{d,d+1\}$.
\end{lemma}

\begin{proof} If $D=d$ is an integer then, clearly, $\Phi(D)\leq d^2$, and by
the Cauchy-Schwarz inequality we have
\begin{equation}\label{bound2}
d^2=(\sum k\mu_k )^2=(\sum k\sqrt{\mu_k}\sqrt{\mu_k})^2\leq \sum
k^2 \mu_k =\Phi(D).
\end{equation}
Thus $\Phi(D)=d^2$ and the minimizing  sequence is $m_d=1$, $m_k=0$
$\forall k\not=d$.

Consider now the case when $D$ is non integer and $D = \alpha d +
(1 - \alpha) (d+1)$. Let $\{\mu_k\}$ be a minimizing sequence. If
$\mu_k > 0$ for some $k < d$ then there is $\mu_l> 0$ for some
$l\geq d+1$. Decrease $\mu_k$ and $\mu_l$ by a sufficiently small
$\delta>0$ and increase $\mu_{k+1}$ and $\mu_{l-1}$ by $\delta$.
This modification changes neither \eqref{tightness} nor
\eqref{D_represent} but decreases the value of the functional $\sum k^2
\mu_k$. Therefore $\mu_k=0$  $\forall k\not\in\{d,\,d+1\}$. The
case when $\mu_k > 0$ for some $k > d+1$ is similar. It follows
from \eqref{tightness} and \eqref{D_represent} that
$\mu_{d}=\alpha$, $\mu_{d+1}=1-\alpha$ and straightforward
calculations yield the result.
\end{proof}

\subsection{Upper bound}

In order to complete the proof of $\Gamma$-convergence (\ref{Gamma_limit})
we have to show the $\limsup$-inequality, i.e.,  given $D\in L^2(\Omega)$, we need
to construct a (recovery) sequence of tuples $\{d_j^\ve\}$ satisfying
the boundedness condition (\ref{sum_d_j_squared}), that converge to $D(x)$
in the sense of (\ref{vorticity_conv}) and
satisfy inequality (\ref{limsup}).


The limiting functional $\bar E_0(D)$ is continuous with respect
to the strong convergence in $L^2(\Omega)$ therefore it is
sufficient to establish the $\limsup$-inequality for $D\in
C^\infty_0(\Omega)$ and then use density of $C^\infty_0(\Omega)$
in $L^2(\Omega)$. Moreover, due to Lemma \ref{hom_lemma} in order to
establish \eqref{limsup}, it is sufficient to  construct a
recovery sequence  of tuples $\{d_j^\ve\}$ that satisfies
\begin{equation}
\label{aux_upper_bound} \ve^2\sum
(d_j^\ve)^2\leq\int_\Omega\Phi(D(x))\,dx.
\end{equation}
Note that we not only need the converge of tuples in the sense of
(\ref{vorticity_conv}) but more importantly we need the
convergence of energies which does not follow from
(\ref{vorticity_conv}).

The key issue in the construction of the upper bound is that
different configurations of vortices  may lead to the same
homogenized vorticity $D(x)=\sum k\mu_k(x)$, however, these
configurations can be distinguished by $\sum k^2\mu_k(x)$ (which
is equal to $\Phi(D)$ for  the optimal $\mu_k$ given by Lemma
\ref{Phi}).\begin{footnote} {This important point can be
illustrated heuristically  as follows. In domain $\Omega$ consider
$\ve$-periodic checkerboard microstructure with black cells having
degree $-1$ and white cells having degree $1$. Then in the limit
$\ve\to 0$ we get  $\mu_{-1}=\mu_{1}=1/2$.  Therefore the first
moment is zero, $D = (-1)\frac{1}{2} + (1)\frac{1}{2}= 0$, whereas
the second moment is $\sum k^2 \mu_k = 1$. Next consider
checkerboard with both black and white cells having degrees $0$.
This yields $\mu_{0}=1$ and $\mu_{k}=0, \; k \neq 0$ in the limit
$\ve\to 0$, and therefore $D=0$ as in the previous case. However
now the second moment is $\sum k^2 \mu_k = \Phi = 0$ and it is
optimal from the energy standpoint.}\end{footnote} Thus we need to
choose $d^\ve_j$ that define $\mu^\ve_k$ via (\ref{repres}) so that
the limiting values $\mu_k$ are optimal in the sense of
(\ref{toy_min}).

According to Lemma \ref{Phi} if we represent $D(x)$ for fixed
$x\in\Omega$ as a convex hull of the nearest integers $d$ and
$d+1$, $D(x)=\alpha d +(1-\alpha) (d+1)$, then we must have only
holes with degree $d$ and $d+1$ in a small neighborhood of $x$ and
in this neighborhood $\#\{\text{holes with degree}\
d\}/\#\{\text{holes with degree}\ d+1\}$ should be approximately
equal to $\alpha/(1-\alpha)$.

Recall that the centers of holes $a^\ve_j$ form an $\ve$-lattice and
therefore partition the domain $\Omega$ into fine squares
$\Pi^\ve_j$ of side length $\ve$. In order to construct the
distribution of vortex degrees that yields the optimal probabilities $\mu_k(x)$ in
the  homogenization limit (averaging over the $\ve$ scale), we need a  partition of $\Omega$ on a coarser scale.
To this end
we  fix a  sufficiently
large integer $M$ and consider squares $K_k$ with side length
$(2M+1)\ve$ and centers at points $a^\ve_{k}$, which form an
$(2M+1)\ve$-periodic lattice. Thus, we introduce a mesoscale  $(2M+1)\ve$.
This would allow to the construction of  $\tilde D(x)$ that
approximates $D(x)$ in $L^2(\Omega)$ for small $\ve$ and large $M$.

Consider a square $K_k$ that lies  strictly
 inside $\Omega$ (it contains exactly $(2M+1)^2$ holes).  Let
$D_k$ denote the mean value of $D(x)$ on $K_k$,
$$
D_k:=\frac{1}{(2M+1)^2 \ve^2}\int_{K_k} D(x) dx,
$$
and let $d_k$ be the integer such that $d_k\leq D_k<d_k+1$.  We
next use Lemma \ref{Phi} to recover the degrees of vortices. To
this end we represent   $D_k$ as the convex combination of $d_k$
and $d_k+1$, $D_k=\alpha_k d_k+(1-\alpha_k) (d_k+1)$ ($0\leq
\alpha_k<1$). Thus we need to find a distribution of degrees $d_k$
and $d_{k+1}$ over cells that lie in $K_k$. Since $K_k$ has
finitely many cells, this cannot be done exactly and  we choose
the
largest integer $R>0$ 
such that $R/(2M+1)^2\leq \alpha_k$ and set $d^\ve_j:=d_k$ for $R$
holes $\omega_j^\ve$ in $K_k$ and $d^\ve_j:=d_k+1$ for the remaining
holes in $K_k$. We repeat this procedure for all squares $K_k$
lying strictly inside $\Omega$ and set degrees of the remaining holes
to be zero.  Thus the constructed distribution of degrees defines
the functions $\mu_k^\ve$  by (\ref{repres}), $$ \ve^2\sum
(d_j^\ve)^2=\sum_{l\in Z} l^2 \int_\Omega \mu_l^\ve(x)\,dx.
$$
We claim that for sufficiently small $\ve>0$
\begin{equation}
\label{upperbound}
\tilde E_\ve(\tilde h^\ve))\leq \bar
E_0(D)+\delta_M,
\end{equation}
where $\delta_M\to 0$ as $M\to \infty$. Indeed, due to
boundedness, we have that, up to extracting a subsequence,
$\mu_l^\ve\to\mu_l(x)$ weakly in $L^2(\Omega)$ for all $l\in Z$.
The corresponding limiting vorticity $\tilde D(x)$ (which depends
on $M$) is given by $\tilde D(x)=\sum l\mu_l(x)$. Due to the above
construction of tuples $\{d^\ve_j\}$ on squares, we have two
cases.

First,
when $D(x)$ is not close to integers, ${\rm dist}(D(x),\mathbb{Z})\geq 1/(2M)^2$,
we have $|\mu_d(x)-\alpha|\leq 1/(2M)^2$ and
$|\mu_{d+1}(x)-(1-\alpha)|\leq 1/(2M)^2$, where $\alpha d+(1-\alpha) (d+1)=D(x)$ is the representation of
$D(x)$ as the convex combination of nearest integers. Second, when $|D(x)-d|\leq
1/(2M)^2$ for some integer $d$, we have $|\mu_d(x)-1|\leq 2/(2M)^2$.
 It follows that
\begin{equation}
\label{upperbound_m}
\lim_{\ve\to 0} \ve^2\sum (d_j^\ve)^2=\sum l^2\int_\Omega
\mu_l(x)\leq\int_\Omega \Phi(\tilde D(x))\, dx+C/M^2.
\end{equation}
Analogously by the construction of tuples $\{d^\ve_j\}$ 
we  have $|\tilde D(x)-D(x)|\leq C/M^2$ in $\Omega$. Thus, by
virtue of Lemma \ref{hom_lemma} we obtain (\ref{upperbound}) by
taking the limit $\ve \to 0$ for a given $M$. Now we consider the limit $M
\to \infty$ so that the last term in (\ref{upperbound_m})
vanishes, which proves the desired upper bound.

\subsection{$\Gamma$-convergence theorem}

We summarize the results of this Section in the following

\begin{theorem}
The functionals $\tilde E_\ve$ $\Gamma$-converge to $\bar E_0(D)$ as
$\ve \to 0$.
The limit functional $E_0(D)$ is given by
\begin{equation}
\label{limfunctE0}
E_0(D)=\frac{1}{2}\int_\Omega \left(|\nabla \bar h|^2+(\bar h-\sigma)^2\right)dx+\pi\gamma\int_\Omega\Phi (D(x))dx,
\end{equation}
where $\bar h=\bar h(D)$ is the unique solution of (\ref{hom_eq_bar_h}) and $\Phi(D)=(2k+1)|D|-k-k^2$ if $k\leq|D|< k+1$, $k=0,1,2,\dots$, see Fig \ref{fig}.
\end{theorem}

\begin{figure}
\center{
\includegraphics[width = 4 in] {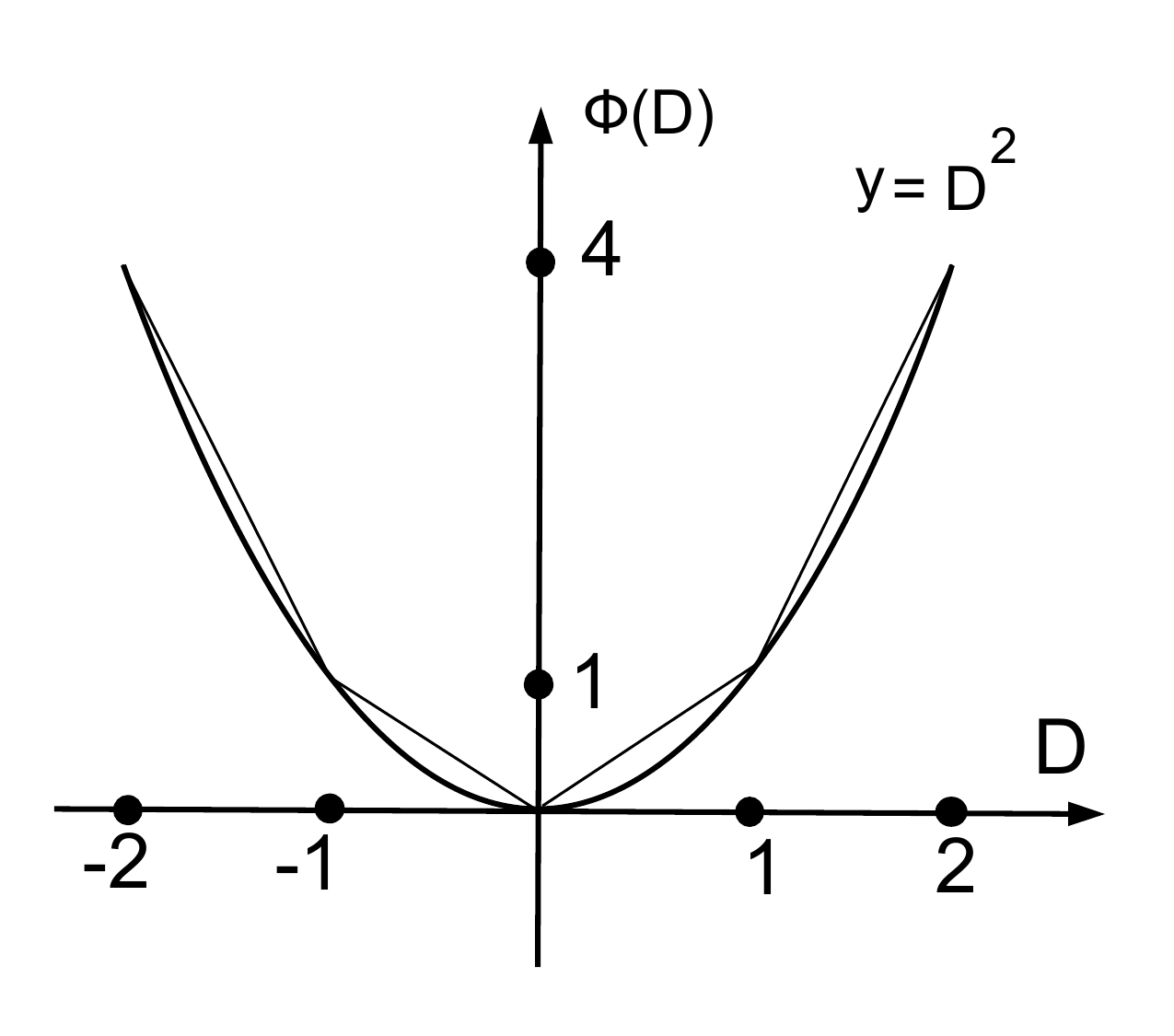}
\caption{\label{fig} The function $\Phi(D)$}}
\end{figure}

This yields the main homogenization result of this work

\begin{theorem}[homogenized  vorticity]
Let  $\{d_j^\ve\}$ be a tuple of integer degrees solving the minimization problem
(\ref{min_prob}), (\ref{harm_map_func})
(i.e. the solution $h^\ve$ of problem (\ref{eq_h}) minimizes the functional
(\ref{reduced_harm_map})),  then
\begin{equation}
\label{main_result}
\ve^2 \sum d_j^\ve\delta_{a_j^\ve}(x)\rightharpoonup D(x),\ \text{in the sense of distributions},
\end{equation}
where $D$ is the unique minimizer of the functional $E_0(D)\
\text{in}\ L^2(\Omega)$, $E_0(D)$ being given by
(\ref{limfunctE0}).
\end{theorem}

\begin{proof} Note that the $\Gamma$-limit functional $E_0(D)$ is strictly convex, continuous and coercive,
therefore it has a unique minimizer. On the other hand, by Lemma
\ref{bound_deg} the weak limit $ \ve^2 \sum
d_j^\ve\delta_{a_j^\ve}(x)\rightharpoonup D(x)$ exists (up to
extracting a subsequence) and $D\in L^2(\Omega)$. Therefore, due
to the classical properties of $\Gamma$-convergence, $D$ is the
unique minimizer of $E_0(D)$. \end{proof}

\begin{remark}
 Note that  this  theorem provides  the homogenized vorticity $D(x)$.
 However, our goal is to describe the distribution of  multiplicities of vortices in terms of  probabilities $\mu_{k} (x)$ of having a vortex of degree $k$  in a neighborhood of a point $x$ .  It can be seen from \eqref{D_represent} that $\mu_{k} (x)$ are not  uniquely defined by $D(x)$. However, the optimality condition \eqref{toy_min} selects  $\mu_{k} (x)$ uniquely so that \eqref{lower_bound_extra} becomes an equality (otherwise \eqref{lower_bound_extra} will be a strict inequality).
The recepie of computing $\mu_{k} (x)$ follows Lemma \ref{Phi}. If $D(x)=d$ is an integer , then the unique minimizing sequence   is  $\mu_d(x)=1$, $\mu_k(x)=0$ \ $\forall k\not=d$. In the case $D(x)$ is
non integer, represent $D(x)$ as the convex hull of two nearest
integers $d$ and $d+1$, $D(x)=\alpha d+(1-\alpha) (d+1)$, then the
unique minimizing sequence is  $\mu_{d}(x)=\alpha$,
$\mu_{d+1}(x)=1-\alpha$, $\mu_k(x)=0$ \ $\forall k\not\in\{d,d+1\}$.
\end{remark}

\section{Analysis of the limit problem via convex duality. Hierarchy of multiplicities }\label{4}

We use convex duality (see, e.g., \cite{ET}) to pass from the
problem
\begin{multline}
\label{gama_lim_direct}
M_\sigma=\min\Bigl\{\frac{1}{2}\int_\Omega\left( |\nabla (h-\sigma)|^2+(h-\sigma)^2
+2\pi\gamma \Phi((-\Delta h+h)/(2\pi))\right)dx;
\Bigr.\\
\Bigr.
(h-\sigma)\in
H^1_0(\Omega),\, -\Delta h+h\in L^2(\Omega)\Bigr\}
\end{multline}
to the dual one
\begin{equation}
\label{gama_lim_inverse}
-M_\sigma=\min\left\{\frac{1}{2}\int_\Omega ( |\nabla f|^2+f^2)\,dx+
\mathcal{F}^{*}(-f);\ f\in H^1_0(\Omega)\right\},
\end{equation}
where $\mathcal{F}^{*}(f)$ is the Legendre transform of the functional
$$
\mathcal{F}(\kappa)=\pi\gamma\int_\Omega\Phi((-\Delta \kappa+\kappa+\sigma)/(2\pi)),
$$
i.e.
$$
\mathcal{F}^{*}(f)=\sup \left\{\int_\Omega ( \nabla f\cdot \nabla\kappa+f\kappa)\,dx -\mathcal{F}(\kappa);
\, \kappa\in H^1_0(\Omega)\right\}.
$$
Due to the fact that $\mathcal{F}(\kappa)$ is lower
semicontinuous, the minimizer $(\bar h-\sigma)$ of
(\ref{gama_lim_direct}) and minimizer $\bar f$ of
(\ref{gama_lim_inverse}) coincide (moreover $M_\sigma$ in
(\ref{gama_lim_direct}) and (\ref{gama_lim_inverse}) is the same).
Thus, we have
\begin{equation}
\label{vort_formula}
2\pi D(x)=-\Delta \bar h+\bar h= -\Delta \bar f+\bar f+\sigma
\end{equation}

The calculation of the Legendre transform $\mathcal{F}^{*}(f)$ is
reduced to the calculation of the Legendre transform $\Phi^*(f)$ of
the function $\pi\gamma\Phi(z/(2\pi))$. Indeed, if we use
integration by parts we derive
\begin{multline*}
  \int_\Omega ( \nabla f\cdot \nabla\kappa+f\kappa)\,dx -\mathcal{F}(\kappa)=
\\
\int_\Omega (-\Delta \kappa +\kappa+\sigma)f\,dx -
\int_\Omega\bigl( \pi\gamma\Phi\bigl((-\Delta \kappa+\kappa+\sigma)/(2\pi)\bigr)-\sigma f\bigr)dx,
\end{multline*}
and therefore
\begin{equation}
\mathcal{F}^{*}(-f)=\int_\Omega( \Phi^*(-f)+\sigma f)dx.
\end{equation}
The Legendre transform  $\Phi^*(f)$ of $\pi\gamma\Phi(z/(2\pi))$ is given by $\Phi^*(f)=0$ for
$|f|\leq\gamma/2$ and $\Phi^*(f)=2\pi k |f|-\pi\gamma k^2$ for
$k\gamma-\gamma/2\leq |f|\leq k\gamma+\gamma/2$.

Thus (\ref{gama_lim_direct}) is equivalent to the problem
\begin{equation}
\label{gama_lim_dual}
\min\Bigl\{\frac{1}{2}\int_\Omega\left( |\nabla f|^2+f^2
+2\Phi^*(f)+2\sigma f\right)dx;
f\in  H^1_0(\Omega)\Bigr\},
\end{equation}
and the limit vorticity is defined in terms of the minimizer $\bar f$ by the formula (\ref{vort_formula}).

The following Lemma shows the monotone dependence of minimizers on
$\sigma$. It is an important tool in the analysis of the
dependence of the vorticity domains on the magnitude of the magnetic
field.

\begin{lemma}
\label{monot_lem} Let $\bar f_\sigma$ be the minimizer of
(\ref{gama_lim_dual}), then $\bar f_\sigma\leq 0$ in $\Omega$ for
every $\sigma>0$, and $\bar f_\alpha\leq \bar f_\beta$ in $\Omega$
if $\alpha>\beta$.
\end{lemma}

\begin{proof}
Set
\begin{equation*}
    \tilde{\Phi}^{*}_{\delta}(f) = \frac{1}{2\delta}\int^{f + \delta}_{f - \delta}
    \Phi^{*}(z)\,dz,
\end{equation*}
since it is an integral of a continuous function,
$\tilde{\Phi}^{*}_{\delta}\in C^1(\mathbb{R})$. Moreover,
$\tilde{\Phi}^{*}_{\delta}$ is convex thanks to the convexity of
$\Phi^{*}(f)$.

%
Let $\tilde{f}_{\sigma}$ be the minimizer of the functional
(\ref{gama_lim_dual}) with $\tilde{\Phi}^{*}_{\delta}$ in place of
${\Phi}^{*}$. It is obvious that this minimizer is a continuous
function. Assume that
the function $\tilde f = \tilde f_\beta - \tilde f_\alpha$ has a
negative minimum. Subtracting the Euler-Lagrange equation for $\tilde
f_\alpha$ from that for $\tilde f_\beta$ we obtain
\begin{equation*}
    -\Delta \tilde f + \tilde f + (\beta - \alpha) +
    (\tilde{\Phi}^{*}_{\delta})^{\prime}(f_\beta) - (\tilde{\Phi}^{*}_{\delta})^{\prime}(f_\alpha) = 0
\end{equation*}
At the minimum point of $\tilde f$ we have that $- \Delta \tilde f
\leq 0$, $\tilde f < 0$, $\beta - \alpha < 0$ and
$(\tilde{\Phi}^{*}_{\delta})^{\prime}(\tilde f_\beta) -
(\tilde{\Phi}^{*}_{\delta})^{\prime}(\tilde f_\alpha) \leq 0$.
Thus we have a contradiction and therefore $\tilde f_\beta \geq
\tilde f_\alpha$. In particular, setting $\beta=0$ we get $\tilde
f_\sigma\leq 0$ for $\sigma>0$.

The result follows by passing to the limit $\delta\to 0$.
\end{proof}

\subsection{Weak magnetic fields: Zero vorticity}

Let us consider weak magnetic fields $h_{ext}^\ve=\sigma/\ve^2$,
such that $\sigma>0$ is small.
It is natural to expect that for
such magnetic fields the minimizer $\bar f_\sigma$ of the problem
(\ref{gama_lim_dual}) satisfies  $-\gamma/2<f_\sigma\leq 0$
therefore $\Phi^*(f_\sigma+v)=0$ in $\Omega$  for every
sufficiently small smooth test function $v\in C_0^\infty(\Omega)$,
and this implies that $\bar f_\sigma$ must solve the problem
\begin{equation}
\begin{cases}
\Delta f=f+\sigma\ \text{in}\ \Omega\\
f=0\ \text{on}\ \partial\Omega.
\end{cases}
\label{no_vort_eq}
\end{equation}
More precisely, the case of zero vorticity is described by

\begin{proposition}
\label{first_sim_prop}
Let $f_1$ be the solution of  (\ref{no_vort_eq}) for $\sigma=1$ and $\gamma$ is defined in
(\ref{scales}). If
\begin{equation}
\sigma\leq\sigma_{{\rm cr}1}:=\frac{\gamma}{2\max |f_1|}
\label{lambda_cr1}
\end{equation}
then the minimizer $\bar f_\sigma$ of (\ref{gama_lim_dual}) is given by $\bar f_\sigma=\sigma f_1$, and,
according to  (\ref{vort_formula}), $D(x)=0$.
\end{proposition}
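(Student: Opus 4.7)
The plan is to exhibit the obvious candidate $\bar f_\lambda := \lambda f_1$ as the unique minimizer of the strictly convex dual functional in (\ref{gama_lim_dual}); the vorticity formula (\ref{vort_formula}) will then immediately give $D\equiv 0$. Strict convexity, inherited from the $H^1$-norm-squared portion of the integrand and preserved by the convex terms $\Phi^*(f)$ and $\lambda f$, guarantees both existence and uniqueness of the minimizer, so the identification reduces to a first-order optimality check at the candidate.

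The first step is to verify that $\lambda f_1$ lies in the flat region of $\Phi^*$. Applying the weak maximum principle to (\ref{no_vort_eq}) with $\lambda=1$ (equivalently $-\Delta f_1 + f_1 = -1$ with zero boundary data) shows $f_1 \leq 0$ in $\Omega$; indeed, $v := -f_1$ satisfies $(-\Delta + 1)v = 1 \geq 0$ with $v = 0$ on $\partial\Omega$, hence $v \geq 0$. Consequently $\lambda f_1(x) \in [-\lambda \max|f_1|,\,0]$, and the hypothesis $\lambda \leq \lambda_{{\rm cr}1} = \gamma/(2\max|f_1|)$ pins $\lambda f_1(x)$ into $[-\gamma/2,\,0]$ for every $x\in\Omega$. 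On this interval $\Phi^*$ is identically $0$, so $0\in\partial\Phi^*(\lambda f_1(x))$ pointwise (at interior points of the flat interval the subdifferential is $\{0\}$; at the endpoints $\pm\gamma/2$ it is the interval $[-2\pi,0]$ or $[0,2\pi]$, still containing $0$).

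Next I would verify the subdifferential optimality condition. Writing the functional in (\ref{gama_lim_dual}) as
\[
J(f) = \tfrac{1}{2}\int_\Omega\bigl(|\nabla f|^2 + f^2 + 2\Phi^*(f) + 2\lambda f\bigr)\,dx,
\]
Rockafellar's sum rule gives $0 \in \partial J(\bar f)$ iff there exists a measurable selection $p(x)\in\partial\Phi^*(\bar f(x))$ such that
\[
\int_\Omega \bigl(\nabla \bar f\cdot\nabla v + \bar f v + pv + \lambda v\bigr)\,dx = 0 \qquad \forall v\in H^1_0(\Omega).
\]
Taking $\bar f = \lambda f_1$ and the constant selection $p\equiv 0$, this collapses to the weak form of $-\Delta(\lambda f_1) + \lambda f_1 = -\lambda$ with zero boundary data, which holds by linearity and the definition of $f_1$. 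Strict convexity of $J$ then forces $\bar f_\lambda = \lambda f_1$, and substituting into (\ref{vort_formula}) yields $2\pi D(x) = -\Delta(\lambda f_1) + \lambda f_1 + \lambda = -\lambda + \lambda = 0$.

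The only mild obstacle is the nonsmoothness of $\Phi^*$ at $|f| = \gamma/2$, which precludes a direct classical Euler--Lagrange derivation. The subdifferential formulation handles this cleanly, since $0$ is a valid selection throughout the closed flat interval. As a smooth alternative, one can reuse the mollifications $\tilde\Phi^*_\delta$ from Lemma \ref{monot_lem}: the corresponding regularized minimizer $\tilde f^\delta_\lambda$ solves $-\Delta \tilde f + \tilde f + (\tilde\Phi^*_\delta)'(\tilde f) + \lambda = 0$, and since $(\tilde\Phi^*_\delta)'$ vanishes on $[-\gamma/2 + \delta,\,\gamma/2 - \delta]$ while $\tilde f^\delta_\lambda$ stays inside this interval uniformly in $\delta$ (by monotonicity in $\lambda$ and the strict inequality in the hypothesis, perturbing slightly if needed), passing $\delta \to 0$ in $H^1(\Omega)$ recovers $\lambda f_1$ and completes the argument.
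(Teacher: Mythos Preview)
Your argument is correct, but the paper's proof is far more direct. Rather than invoking subdifferential calculus or regularization, the paper simply observes that $\Phi^*\geq 0$ everywhere, so the functional in (\ref{gama_lim_dual}) dominates the quadratic functional obtained by deleting the $2\Phi^*(f)$ term. The minimizer of that smaller functional is $\lambda f_1$, and under the hypothesis $\lambda\leq\lambda_{{\rm cr}1}$ one has $|\lambda f_1|\leq\gamma/2$, hence $\Phi^*(\lambda f_1)=0$ and the two functionals agree at $\lambda f_1$; therefore $\lambda f_1$ minimizes the original problem as well. Your route via first-order optimality and strict convexity is sound and has the virtue of making uniqueness explicit, but it expends more machinery than necessary: the comparison argument needs nothing beyond $\Phi^*\geq 0$ and the identification of the flat region, both of which you already established in your first step. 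The mollification detour in your final paragraph is unnecessary once the subdifferential argument is in place, and its claim that $\tilde f^\delta_\lambda$ stays in $[-\gamma/2+\delta,\gamma/2-\delta]$ would require a separate justification in the borderline case $\lambda=\lambda_{{\rm cr}1}$.
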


\begin{proof}
Since
\begin{equation}
\frac{1}{2}\int_\Omega\left( |\nabla f|^2+f^2
+2\Phi^*(f)+2\sigma f\right)dx
\geq \frac{1}{2}\int_\Omega\left( |\nabla f|^2+f^2+2\sigma f\right)dx
\label{compar_funct}
\end{equation}
for every $f$, and (\ref{compar_funct}) becomes an equality if $f$ is the minimizer of the right hand side,
$f=\sigma f_1$, the result follows.
\end{proof}

\subsection{Moderate magnetic fields: Simple vortices}

 If the parameter $\sigma$ begin to  exceed $\sigma_{{\rm cr}1}$,
 then $\sigma f_1$  is no longer the minimizer of  (\ref{gama_lim_dual}),
 and  \eqref{gama_lim_dual} is
 reduced to the  following variational problem
\begin{equation}
\min\left\{\frac{1}{2}\int_\Omega\left( |\nabla f|^2+f^2
+4\pi(|f|-\gamma/2)_{+}+2\sigma f\right)dx;\ f\in H^1_0(\Omega)\right\}.
\label{first_votrtex_f}
\end{equation}

\begin{proposition}
\label{p_mod1}
Let $g_\sigma$ be the minimizer of  (\ref{first_votrtex_f}) and let
\begin{equation}
\sigma_{{\rm cr}2}:=\max\{\sigma>0; \max |g_\sigma|\leq 3\gamma/2\}.
\label{lambda_cr2}
\end{equation}
Consider $\sigma \in (\sigma_{{\rm cr}1}, \sigma_{{\rm cr}2})$,
then the minimizer $\bar f_\sigma$ of (\ref{gama_lim_dual}) coincides with that of  (\ref{first_votrtex_f}).
\end{proposition}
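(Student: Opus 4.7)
The plan is to establish a pointwise convex comparison between the integrand $2\Phi^{*}(f)$ appearing in (\ref{gama_lim_dual}) and its ``first-vortex'' truncation $4\pi(|f|-\gamma/2)_+$ appearing in (\ref{first_votrtex_f}), and then exploit the hypothesis $\max|g_\lambda|\le 3\gamma/2$ to force the two functionals to agree along $g_\lambda$.

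First, I would record the key pointwise inequality. Since $\Phi^{*}$ is piecewise linear, continuous, convex and even, with $\Phi^{*}(f)=0$ for $|f|\le\gamma/2$ and $\Phi^{*}(f)=2\pi k|f|-\pi\gamma k^2$ on $k\gamma-\gamma/2\le|f|\le k\gamma+\gamma/2$, a direct check shows that on the first-vortex strip $\gamma/2\le|f|\le 3\gamma/2$ one has $\Phi^{*}(f)=2\pi(|f|-\gamma/2)$, while for $|f|>3\gamma/2$ the slopes $4\pi,6\pi,\ldots$ of the successive linear pieces are strictly larger than $2\pi$. Consequently,
\begin{equation*}
\Phi^{*}(f)\ge 2\pi(|f|-\gamma/2)_+\qquad\forall f\in\R,
\end{equation*}
with equality if and only if $|f|\le 3\gamma/2$.

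Denote by $J_{\rm dual}$ the functional in (\ref{gama_lim_dual}) and by $J_1$ the functional in (\ref{first_votrtex_f}). Integrating the above inequality, $J_{\rm dual}(f)\ge J_1(f)$ for every $f\in H^1_0(\Omega)$, with equality whenever $|f(x)|\le 3\gamma/2$ a.e. Under the hypothesis $\lambda\le\lambda_{{\rm cr}2}$ the minimizer $g_\lambda$ of $J_1$ satisfies $\max|g_\lambda|\le 3\gamma/2$, so $J_{\rm dual}(g_\lambda)=J_1(g_\lambda)$, and for every $f\in H^1_0(\Omega)$
\begin{equation*}
J_{\rm dual}(f)\ge J_1(f)\ge J_1(g_\lambda)=J_{\rm dual}(g_\lambda).
\end{equation*}
Thus $g_\lambda$ minimizes $J_{\rm dual}$. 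Uniqueness, which identifies $\bar f_\lambda$ with $g_\lambda$, follows from strict convexity of $J_{\rm dual}$: the quadratic part $\tfrac{1}{2}\int(|\nabla f|^2+f^2)dx$ is strictly convex on $H^1_0(\Omega)$, and the remaining terms $2\Phi^{*}(f)+2\lambda f$ are convex.

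The whole argument is short once the comparison inequality for $\Phi^{*}$ is in hand; that inequality is the only place where the specific geometry of the graph of $\Phi^{*}$ enters. The one side issue is ensuring that the maximum in the definition of $\lambda_{{\rm cr}2}$ is actually attained, i.e.\ that $\lambda\mapsto\max|g_\lambda|$ is continuous and monotone—this is handled by repeating the argument of Lemma \ref{monot_lem} with the smooth approximation $\tilde\Phi^{*}_\delta$ replaced by a smooth approximation of $2\pi(|f|-\gamma/2)_+$, which yields $g_\alpha\le g_\beta\le 0$ for $\alpha>\beta>0$ and hence the desired monotonicity. I expect no further obstacle: the proof is essentially a one-line convex comparison plus a strict-convexity uniqueness remark.
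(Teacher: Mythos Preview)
Your proposal is correct and follows essentially the same approach as the paper: a pointwise comparison $\Phi^{*}(f)\ge 2\pi(|f|-\gamma/2)_+$ with equality iff $|f|\le 3\gamma/2$, combined with the monotonicity of $g_\lambda$ in $\lambda$ (an analog of Lemma~\ref{monot_lem}) to ensure $|g_\lambda|\le 3\gamma/2$ for all $\lambda\le\lambda_{{\rm cr}2}$. The paper simply states that the proof is identical to that of Proposition~\ref{first_sim_prop} and invokes Lemma~\ref{monot_lem} for the monotonicity step; you have written out explicitly what that entails.
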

\begin{proof} The proof is identical to that of Proposition \ref{first_sim_prop}. Note that we make use
here of Lemma \ref{monot_lem} to get that the minimizer $g_\sigma$ of  (\ref{first_votrtex_f}) satisfies the
inequality $g_\sigma\geq -3\gamma/2$ in $\Omega$ when $\sigma\leq\sigma_{{\rm cr}2}$.
\end{proof}

In order to describe the limiting vorticity function $D(x)$ we
have to study problem (\ref{first_votrtex_f}) in more details.

\begin{proposition}
\label{p_mod2}
If 
$\sigma_{{\rm cr}1}<\sigma\leq 2\pi + \gamma/2$, then (\ref{first_votrtex_f}) is reduced to the
obstacle problem
\begin{equation}
\min\left\{\frac{1}{2}\int_\Omega\left( |\nabla f|^2+f^2
+2\sigma f\right)dx;\ f\in H^1_0(\Omega),\, |f|\leq \gamma/2 \right\}.
\label{obstacle_problem}
\end{equation}
The minimizer $\bar f_\sigma$ takes the value $-\gamma/2$ on a set
with nonzero Lebesgue measure on $\mathbb{R}^2$. Moreover, the
vorticity $D(x)$ is zero in the domain where $\bar f(x)>-\gamma/2$
and $D(x)=(\sigma-\gamma/2)/(2\pi)$ otherwise.

If 
$\max\{\sigma_{{\rm cr}1},2 \pi + \gamma/2\}<\sigma\leq
\sigma_{{\rm cr}2}$ then $D(x)=1$ when $\bar
f_\sigma(x)<-\gamma/2$ and $D(x)=0$ if $\bar
f_\sigma(x)\geq-\gamma/2$, where $\bar f_\sigma$ is the
minimizer of (\ref{first_votrtex_f}).
\end{proposition}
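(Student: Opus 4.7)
The plan exploits the structure of the functional in (\ref{first_votrtex_f}): the penalty $4\pi(|f|-\gamma/2)_+$ is identically zero on the strip $|f|\leq\gamma/2$ and, outside that strip, has a one-sided slope $2\pi$ (after the overall $\tfrac12$ prefactor). By Lemma \ref{monot_lem} (via Proposition \ref{p_mod1}) one already has $\bar f_\lambda\leq 0$, so the only question is whether $\bar f_\lambda$ dips below $-\gamma/2$ on a set of positive measure. The threshold $\lambda=2\pi+\gamma/2$ is precisely the balance between the downward push $\lambda$ of the linear term and the combined restoring effect of the kink slope $2\pi$ plus the flat-strip width $\gamma/2$.

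For Part 1, with $\lambda_{{\rm cr}1}<\lambda\leq 2\pi+\gamma/2$, the first step is to show $\bar f\geq-\gamma/2$ a.e.\ via a truncation argument. Suppose $A=\{\bar f<-\gamma/2\}$ has positive measure; set $\tilde f=\max(\bar f,-\gamma/2)$ and $u=-\bar f-\gamma/2>0$ on $A$, so $\nabla\tilde f=0$ on $A$. A direct calculation of the functional difference (denoting by $I$ the functional in (\ref{first_votrtex_f})) gives
\begin{equation*}
I(\tilde f)-I(\bar f)=-\tfrac12\int_A|\nabla\bar f|^2\,dx+\int_A u\Bigl(\lambda-2\pi-\tfrac{u+\gamma}{2}\Bigr)dx,
\end{equation*}
whose right-hand side is strictly negative once $\lambda\leq 2\pi+\gamma/2$, contradicting minimality. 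Combined with $\bar f\leq 0$ this forces $|\bar f|\leq\gamma/2$, the penalty vanishes, and $\bar f$ lies in the admissible class of (\ref{obstacle_problem}) with the same energy; conversely, the obstacle minimizer also makes the penalty zero, so the two problems share the same unique minimizer. The coincidence set $\{\bar f=-\gamma/2\}$ has positive Lebesgue measure, because otherwise the Lagrange multiplier (which is in $L^\infty$ by $C^{1,1}$ obstacle regularity) would vanish a.e., forcing $\bar f=\lambda f_1$, whose minimum drops below $-\gamma/2$ as soon as $\lambda>\lambda_{{\rm cr}1}$. The vorticity formula then follows from (\ref{vort_formula}): on $\{\bar f>-\gamma/2\}$ the free Euler--Lagrange equation gives $2\pi D=-\Delta\bar f+\bar f+\lambda=0$; on the interior of the coincidence set $\bar f\equiv-\gamma/2$ and $\Delta\bar f=0$, so $2\pi D=\lambda-\gamma/2$.

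For Part 2 the sign of the same test reverses. When $\lambda>2\pi+\gamma/2$ the obstacle minimizer of Part 1 no longer satisfies the Euler--Lagrange inclusion for (\ref{first_votrtex_f}) on the interior of its coincidence set (one would need a subgradient $\mu=\gamma/2-\lambda$ lying in $[-2\pi,0]$); by strict convexity of (\ref{first_votrtex_f}), the unique minimizer $\bar f_\lambda$ must therefore differ from the obstacle solution, and since $\bar f_\lambda\leq 0$ this means $\{\bar f_\lambda<-\gamma/2\}$ has positive measure. The bound $\bar f_\lambda\geq-3\gamma/2$, coming from Proposition \ref{p_mod1} together with $\lambda\leq\lambda_{{\rm cr}2}$, confines $\bar f_\lambda$ to the first linear piece of the penalty so that no further kinks of $\Phi^{*}$ are crossed. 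The classical Euler--Lagrange equation then reads $-\Delta\bar f_\lambda+\bar f_\lambda-2\pi+\lambda=0$ on $\{\bar f_\lambda<-\gamma/2\}$ and $-\Delta\bar f_\lambda+\bar f_\lambda+\lambda=0$ on $\{\bar f_\lambda>-\gamma/2\}$, yielding via (\ref{vort_formula}) $D=1$ and $D=0$ respectively.

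The principal technical obstacle is the rigorous handling of the free boundary $\{\bar f_\lambda=-\gamma/2\}$: in Part 1 one needs $\Delta\bar f=0$ a.e.\ on the coincidence set, which is a standard consequence of $H^2_{\mathrm{loc}}$ obstacle-problem regularity (the gradient of $\bar f$ vanishes a.e.\ where $\bar f$ equals its constant obstacle value); in Part 2 the identity (\ref{vort_formula}) is an $L^2(\Omega)$-equality, and the measure-zero level set $\{\bar f_\lambda=-\gamma/2\}$ contributes nothing to $D$.
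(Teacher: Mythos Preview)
Your Part~1 argument is essentially the paper's: the truncation at $-\gamma/2$ is equivalent to the paper's pointwise inequality $f^2+4\pi(|f|-\gamma/2)_{+}+2\lambda f> \gamma^2/4- \lambda \gamma$ for $f<-\gamma/2$, and the positivity of the coincidence set is obtained identically by noting that otherwise $\bar f_\lambda=\lambda f_1$, which violates the obstacle for $\lambda>\lambda_{\mathrm{cr}1}$. Your invocation of $H^2_{\mathrm{loc}}$ obstacle regularity to get $\Delta\bar f=0$ a.e.\ on the coincidence set is correct and in fact goes slightly further than the paper, which leaves the vorticity computation implicit.

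In Part~2 there is a genuine gap: you \emph{assert} that the level set $\{\bar f_\lambda=-\gamma/2\}$ has Lebesgue measure zero, but you do not prove it, and this is exactly what is needed to conclude $D=0$ on $\{\bar f_\lambda\geq-\gamma/2\}$ (on a level set of positive measure one would get $2\pi D=\lambda-\gamma/2>2\pi$, not $0$). The paper devotes most of its Part~2 proof to this point, via a different route from your toolkit: it takes the strip $W=\{-\gamma/2-\delta<\bar f_\lambda\leq-\gamma/2\}$, constructs the harmonic function $U$ in $W$ matching the boundary values $-\gamma/2$ and $-\gamma/2-\delta$, and uses the maximum principle together with the minimality of $\bar f_\lambda$ (comparing with $\min\{\bar f_\lambda,U\}$) to force $\bar f_\lambda\leq U<-\gamma/2$ in the interior of $W$, whence the level set reduces to the measure-zero set $S_1$. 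Your own subdifferential machinery would also close the gap more directly: since $\bar f_\lambda\in H^2_{\mathrm{loc}}$ (the right-hand side of the Euler--Lagrange inclusion is in $L^\infty$), one has $\Delta\bar f_\lambda=0$ a.e.\ on any level set, and then the inclusion $-\Delta\bar f_\lambda+\bar f_\lambda+\lambda\in[0,2\pi]$ at $\bar f_\lambda=-\gamma/2$ would force $\lambda-\gamma/2\leq 2\pi$, contradicting $\lambda>2\pi+\gamma/2$. Either argument works; you just need to supply one of them.
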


\begin{remark} It follows from Proposition \ref{p_mod2} that the vorticity $D(x)$ is zero in a subdomain of
$\Omega$ where $\bar f_\sigma>-\gamma/2$ and $0<D(x)\leq 1$  when  $\bar f_\sigma\leq-\gamma/2$.
Moreover, there are two scenarios, if $\sigma_{{\rm cr}1}<\sigma\leq 2\pi + \gamma/2$ then
$0<D(x)<1$ in the set where  $\bar f_\sigma(x)\leq-\gamma/2$ (holes with degrees one and zero),
and if $\max\{\sigma_{{\rm cr}1}, 2\pi + \gamma/2\}<\sigma\leq
\sigma_{{\rm cr}2}$ then $D(x)=1$ when $\bar f_\sigma(x)\leq-\gamma/2$ (all holes in this set
have degree one).
\end{remark}

\begin{proof} Let $\sigma_{{\rm cr}1}<\sigma\leq 2\pi + \gamma/2$ then
$f^2+4\pi(|f|-\gamma/2)_{+}+2\sigma f> \gamma^2/4- \sigma \gamma$ when
$f<-\gamma/2$. It follows that the minimizer $\bar f_\sigma$ of  (\ref{first_votrtex_f})
satisfies the pointwise inequality $\bar f_\sigma\geq -\gamma/2$. Clearly we also have
$\bar f_\sigma\leq 0$. Thus  $\bar f_\sigma$ minimizes (\ref{obstacle_problem}).
If we assume that $\bar f_\sigma$  satisfies the strict
inequality $\bar f_\sigma(x)>-\gamma/2$ for a.e. $x\in \Omega$, then
$-\Delta \bar f_\sigma(x)+\bar f_\sigma(x)+\sigma=0$ for a.e. $x\in \Omega$ and therefore
$\bar f_\sigma$ is the solution of problem (\ref{no_vort_eq}). Since  $\sigma>\sigma_{{\rm cr}1}$
we have a contradiction with the pointwise bound  $\bar f_\sigma\geq -\gamma/2$.

In the case when $\max\{\sigma_{{\rm cr}1}, 2\pi +
\gamma/2\}<\sigma\leq \sigma_{{\rm cr}2}$ we clearly have
$-\Delta \bar f_\sigma(x)+\bar f_\sigma(x)+\sigma=0$ when
$-\gamma/2<\bar f_\sigma\leq 0$, and $-\Delta \bar
f_\sigma(x)+\bar f_\sigma(x)+\sigma=2\pi$  when $\bar
f_\sigma<-\gamma/2$. Thus we only need to show that the level set
$\bar f_\sigma=-\gamma/2$ has zero measure. To this end consider
the set $W=\{ x\in \Omega; \, -\gamma/2\geq\bar
f_\sigma>-\gamma/2-\delta\}$, where $\delta>0$. For sufficiently
small $\delta$ the boundary of $W$ can be divided into two
nonempty parts $S_1=\{x\in\partial W;\, \bar f_\sigma=
-\gamma/2\}$ and $S_2=\{x\in\partial W;\, \bar
f_\sigma=-\gamma/2-\delta\}$. Both sets $S_1$ and $S_2$ have zero
measure. Consider the function $U$ such that $\Delta U=0$ in the
interior of $W$, $U=-\gamma/2$ on $S_1$, and $U=-\gamma/2-\delta$
on $S_2$. By the maximum principle $U<-\gamma/2$ in the interior
of $W$. On the other hand $\bar f_\sigma\leq U$ (otherwise
$\min\{\bar f_\sigma, U\}$ is a minimizer). Thus the level set
$\bar f_\sigma=-\gamma/2$ coincides with $S_1$ and has zero
measure.
\end{proof}

\subsection{Stronger magnetic fields: Multiple vortices}

For $\sigma>\sigma_{{\rm cr}2}$ vortices with multiplicity two
appear. Similarly to the case of simple vortices there are two
scenarios depending on whether $\sigma_{{\rm cr}2}<\gamma/2+2\pi$
or $\sigma_{{\rm cr}2}\geq\gamma/2+2\pi$. Define
\begin{equation}
\sigma_{{\rm cr}3}:=\max\{\sigma>0; \max |g_\sigma|\leq
5\gamma/2\}, \label{third_cr_field}
\end{equation}
where $g_\sigma$ is the minimizer of the problem
\begin{multline}
\min\left\{\frac{1}{2}\int_\Omega\left( |\nabla f|^2+f^2
+4\pi((|f|-\gamma/2)_{+}+(|f|-3\gamma/2)_{+})+2\sigma
f\right)dx;\right.\\
\left. f\in H^1_0(\Omega)\right\}. \label{second_votrtex_f}
\end{multline}

\begin{proposition}
\label{p_str}
If 
$\sigma_{{\rm cr}2}<\sigma\leq 4\pi + 3\gamma/2$ then
(\ref{second_votrtex_f}) is reduced to the obstacle problem
\begin{equation}
\min\left\{\frac{1}{2}\int_\Omega\left( |\nabla f|^2+f^2
+4\pi(|f|-\gamma/2)_{+}+2\sigma f\right)dx;\ f\in
H^1_0(\Omega),\, |f|\leq 3\gamma/2 \right\}.
\label{obstacle_problem1}
\end{equation}
The minimizer $\bar f_\sigma$ takes the value $-3\gamma/2$ on a set
with nonzero Lebesgue measure on $\mathbb{R}^2$. Moreover, the
vorticity $D(x)$ is zero in the domain where $\bar f_\sigma
(x)>-\gamma/2$, $D(x)=1$ when $-3\gamma/2<\bar
f_\sigma(x)<-\gamma/2$ and $D(x)=(\sigma-3\gamma/2)/(2\pi)$ when
$\bar f_\sigma(x)=-3\gamma/2$.

If 
$\max\{\sigma_{{\rm cr}2}, 4\pi + 3\gamma/2\}<\sigma\leq
\sigma_{{\rm cr}3}$ then $D(x)=0$ when $\bar
f_\sigma(x)>-\gamma/2$, $D(x)=1$ if $-3\gamma/2<\bar
f_\sigma(x)<-\gamma/2$ and $D(x)=2$ when
$f_\sigma(x)<-3\gamma/2$, where $\bar f_\sigma$ is the minimizer
of (\ref{second_votrtex_f}).
\end{proposition}

\begin{remark} In the case when $\sigma_{{\rm cr}2}<\sigma\leq 4\pi + 3\gamma/2$ we see
that vortices with multiplicities one and two coexist in the set
where $\bar f_\sigma(x)=-3\gamma/2$, while all holes have degree
one in the domain where $-3\gamma/2<\bar f_\sigma(x)<-\gamma/2$
and zero degree in the domain where $f_\sigma(x)>-\gamma/2$. If
$\max\{\sigma_{{\rm cr}2}, 4\pi + 3\gamma/2\}<\sigma\leq
\sigma_{{\rm cr}3}$ there are three subdomains, where
$f_\sigma(x)>-\gamma/2$, $-3\gamma/2<\bar f_\sigma(x)<-\gamma/2$
and $\bar f_\sigma(x)<-\gamma/2$. All  holes in these domains have
vortices of degrees zero, one, and two correspondingly.
\end{remark}

The proof of this result is similar to the previous ones. Further
increase of the magnetic field leads to vortices with higher
multiplicities in nested subdomains.  Thus our results can be
summarized in the following

\begin{theorem} \label{main_resultth}
There exists a strictly increasing sequence of critical values
$\sigma_{{\rm cr}j}= \sigma_{{\rm cr}j}(\gamma, \Omega)$, $j=1,2,\dots$ such that if
$\sigma _{{\rm cr}j} < \sigma < \sigma_{{\rm cr}(j+1)}$
the limiting vorticity takes constant values
in subsets $\Omega_k\setminus\Omega_{k+1}$, where
$\Omega_k=\Omega_k(\sigma)$, $k=0,1,\dots,j$ are strictly nested sets (vorticity sets)
and $\Omega_0=\Omega$.
Namely, the vorticity $D(x)=0$ in $\Omega_0 \setminus \Omega_1$
and $D(x)=k$ in  $\Omega_{k} \setminus \Omega_{k+1}$, $k\leq j-1$.
Finally, when $x \in \Omega_j$, then there are two scenarios:
(i) if $ \sigma< 2\pi j+(j-1/2)\gamma$
then $(j-1) < D(x) <j$ otherwise (ii) $D(x)=j$.
\end{theorem}

Finally, the following Lemma describes the  dependence of the
vorticity sets on the magnitude of the  external magnetic field
$\sigma$.

\begin{lemma}  If $\sigma \to \infty$, then
for every $k \geq 1$, the domains $\Omega_k(\sigma)$ monotonically
expand to $\Omega$.   On the other hand  if  $\sigma \to
\sigma_{cr1}$, then $\Omega_1(\sigma)$ shrinks to the set of
finitely many points of minima of  the function $f_1$ defined in
\eqref{first_sim_prop}. If the domain $\Omega$ is convex,  then
this set consists of exactly one point and the domain
$\Omega_1(\sigma)$ shrinks to this point.
\end{lemma}

\begin{proof}
Let us seek the minimizer $f_\sigma$ of \eqref{gama_lim_dual} in the form $f_\sigma = \sigma w_\sigma$. Then
$w_\sigma$ minimizes the functional
\begin{equation}
\label{gama_lim_dual_aux}
\min\Bigl\{\frac{1}{2}\int_\Omega\left( |\nabla w_\sigma|^2+w_\sigma^2
+2\Psi_\sigma(w_\sigma)+2 w_\sigma\right)dx;
w_\sigma\in  H^1_0(\Omega)\Bigr\},
\end{equation}
where $$\Psi_\sigma(t)=\frac{1}{\sigma^2}\Phi^*(\sigma t).$$
Note that $\Psi_\sigma(t)$ is a piecewise linear interpolation of the function
$\pi t^2 /\gamma-\pi\gamma/(4\sigma^2)$ at points $k\gamma/\sigma+\gamma/(2\sigma)$, $k\in \mathbb{Z}$.
Therefore $\Psi_\sigma(t)$ converges uniformly to the function  $\pi t^2 /\gamma$ as $\sigma \to \infty$.
It follows that $w_\sigma$ converges strongly in $H^1(\Omega)$ to the minimizer $w$ of the problem
\begin{equation}
\label{gama_lim_dual_aux_lim}
\min\Bigl\{\frac{1}{2}\int_\Omega\left( |\nabla w|^2+(1+2\pi/\gamma)w^2+2 w\right)dx;
w\in  H^1_0(\Omega)\Bigr\},
\end{equation}
which solves $-\Delta w+(1+2\pi/\gamma)w=-1$ in $\Omega$.
By the maximum principle $w<0$ in $\Omega$. Since $f_\sigma = \sigma w_\sigma$, we have that
every sublevel set $f_\sigma\leq -t$ ($t>0$) expands to the domain $\Omega$. This proves that every
set $\Omega_k(\sigma)$ expands to $\Omega$.

The behavior of $\Omega_1(\sigma)$ as $\sigma \to \sigma_{cr1}$ is
a consequence of the structure of the solution of
\eqref{no_vort_eq} described in \cite{SS} .
\end{proof}
\section{Appendix}


 The derivation of the lower bound in Section
\ref{sectionLimVort} makes use of the concept of Young measures.
Recall that a Young measure is a parametrized family of
probability measures $m_x$ associated  with a family of functions
$\phi_\ve(x)$ ($\phi_\ve:\Omega\to \mathbb{R}$) such that $F(\phi_\ve(x))$  converges to $\int_{\mathbb{R}} F(\sigma)\, d m_x (\sigma)$ in $L^{\infty}(\Omega)$ weak star for every bounded continuous function $F(\sigma)$, i.e.,
\begin{equation}
\label{young} \lim_{\ve\to 0}\int_\Omega
F(\phi_\ve(x))\psi(x)dx=\int_\Omega\int_{\mathbb{R}} F(\sigma)\, d m_x(\sigma)\psi(x) dx,\qquad \forall\psi\in L^1(\Omega).
\end{equation}
It is known
(see, e. g., \cite{B}, \cite{P}, \cite{V}) that under some a
priori bounds on the sequence of function $\phi_\ve(x)$ there
exists a family $m_x$ such that (\ref{young}) holds.

In this work we consider a sequence of integer-valued functions
$D^\ve(x)$. We construct a partition of unity $\mu_k(x)$
associated this sequence via (\ref{repres})-(\ref{mu_k_converg}),
so that the corresponding Young measure $m_x$ on $\mathbb{R}^1$
is the sum of
$\delta$-functions centered at integer points, $m_x(\sigma)=\sum
\mu_k(x)\delta_k(\sigma)$. For fixed $k$, the value $\mu_k(x)$
represents the probability to find a hole $\omega_j^\ve$ with
degree $d_j^\ve=k$ in a small vicinity of the point $x$ (i.e.
$\mu_k(x)$ represents the ratio of holes with degree $k$ in a
small neighborhood of $x$ to the total number of holes in this
neighborhood).
\section*{Acknowledgments} The work of  L.Berlyand was supported by NSF grant DMS--1106666. The work of  V. Rybalko was supported in part  by NSF grant DMS--1106666.
Part of this work was done while V. Rybalko was visiting the
Mathematics Department of the Pennsylvania State University. He is
grateful for the hospitality received during his visits. The
authors thank the graduate students O. Iaroshenko and O. Misiats for the
careful reading of the manuscript. The authors are grateful to V.
Vinokour for the stimulating discussions on the physical aspects of
the pinning in composite superconductors.


\medskip
Received January 2012; revised May 2012.
\medskip

\end{document}